\documentclass[11pt]{amsart}
\usepackage{amsmath,amssymb,amsthm}
\usepackage[a4paper,margin=1.25in]{geometry}
\usepackage[colorlinks=true,linkcolor=blue,citecolor=blue,urlcolor=blue]{hyperref}
\numberwithin{equation}{section}
\newtheorem{theorem}{Theorem}[section]
\newtheorem{proposition}[theorem]{Proposition}
\newtheorem{lemma}[theorem]{Lemma}
\newtheorem{problem}[theorem]{Problem}
\theoremstyle{remark}
\newtheorem{remark}[theorem]{Remark}

\title[Finite zero products of Toeplitz operators]{Finite zero products of Toeplitz operators with harmonic symbols}
\author{Youqi Liu,  Jie Qin, Sui Huang}
\address{School of Mathematics and Statistics, Chongqing Technology and Business University, Chongqing 400067, China}
\email{qinjie24520@163.com}
\date{}
\subjclass[2020]{Primary 47B35. Secondary 30H20, 30D15.}
\keywords{Toeplitz operator, Fock space, harmonic symbol, zero product, entire function of exponential type}

\begin{document}
\begin{abstract}
We study finite products of Toeplitz operators with harmonic symbols on
the Fock space over the complex plane. If every symbol has the form
$f+\overline g$, where $f$ and $g$ are entire functions of exponential
type, we prove that a finite product vanishes if and only if one symbol
vanishes identically. It suffices to test the product on reproducing
kernels. The proof represents the product by a
two-variable entire function and transforms the equation for a left
factor by $\exp(-\partial_z\partial_w)$. A Fock norm identity then
permits cancellation of each nonzero factor.
\end{abstract}
\maketitle

\section{Introduction}\label{S1}

In 1964, Brown and Halmos \cite[Corollary 1 to Theorem 8]{BrownHalmos1964} proved that, for
bounded symbols $\varphi,\psi$ on the unit circle $\mathbb T$, the identity
$T_\varphi T_\psi=0$ on the Hardy space $H^2(\mathbb T)$ forces one
symbol to vanish almost everywhere. Replacing two factors by finitely
many leads to the following zero-product problem.

\begin{problem}\label{Q1}
Let $N\ge2$ and $\varphi_1,\ldots,\varphi_N\in L^\infty(\mathbb T)$.
If
$$
T_{\varphi_1}\cdots T_{\varphi_N}=0
\quad\text{on }H^2(\mathbb T),
$$
must $\varphi_j=0$ almost everywhere for some $j$?
\end{problem}

Aleman and Vukoti\'c \cite{AlemanVukotic2009} answered
Problem \ref{Q1} affirmatively for every finite $N$. The corresponding
question on Bergman spaces has led to results for several classes of
symbols. On the Bergman space $A^2(\mathbb D)$ of the unit disk, Ahern
and \v{C}u\v{c}kovi\'c \cite{AhernCuckovic2001} proved that, if
$\varphi$ and $\psi$ are bounded harmonic functions, then
$$
T_\varphi T_\psi=0
\quad\Longrightarrow\quad
\varphi\equiv0\ \text{or}\ \psi\equiv0.
$$
Further results on two-factor products were obtained through the
Berezin transform in \cite{Ahern2004}. On the unit ball $\mathbb B_n$,
Choe and Koo \cite[Theorems 1.2 and 1.3]{ChoeKoo2006} proved the
finite-product conclusion for $n+3$ harmonic symbols that are H\"older
continuous on $\overline{\mathbb B_n}$. For $n+2$ bounded harmonic
symbols, a H\"older condition near a single boundary point suffices.
For radial symbols, Le \cite{Le2010} used a refinement of Luecking's
finite-rank theorem \cite{Luecking2008} to prove that a finite product
on $A^2(\mathbb D)$ has finite rank only if one symbol vanishes,
provided one symbol belongs to $L^2(\mathbb D)$ and all the remaining
symbols are bounded and radial.

We consider the zero-product problem on the Fock space
$$
\mathcal F^2(\mathbb C)
=\left\{h\in\mathcal O(\mathbb C):
\int_{\mathbb C}|h(z)|^2\,d\mu(z)<\infty\right\},
\quad d\mu(z)=\pi^{-1}e^{-|z|^2}\,dA(z),
$$
where $dA$ is area measure. This space has reproducing kernel
$K_a(z)=e^{z\overline a}$, and we write $P$ for the orthogonal
projection from $L^2(d\mu)$ onto $\mathcal F^2(\mathbb C)$
\cite{Bargmann1961,Folland1989,Zhu2012}. The Toeplitz operator with
measurable symbol $\varphi$ is
$$
T_\varphi h=P(\varphi h),\qquad
\operatorname{Dom}(T_\varphi)
=\{h\in\mathcal F^2(\mathbb C):\varphi h\in L^2(d\mu)\}.
$$
The finite-rank theorem of Borichev and Rozenblum
\cite{BorichevRozenblum2015} implies that a bounded function symbol
cannot define a nonzero finite-rank Toeplitz operator. For two-factor
products, Bauer and Le \cite[Corollary 3.6]{BauerLe2011} proved that
$T_\varphi T_\psi=0$ forces $\varphi=0$ or $\psi=0$ when both symbols
are bounded and one is radial. Their conclusion does not extend to
three factors. In \cite[Proposition 3.7]{BauerLe2011}, they constructed
the bounded radial symbols
$$
\varphi_j(z)=|z|^{2j}e^{-|z|^2}\sin(2\sqrt3\,|z|^2),
\qquad j=0,1,2,
$$
for which each $T_{\varphi_j}$ is nonzero but
$T_{\varphi_0}T_{\varphi_1}T_{\varphi_2}=0$.

For harmonic symbols, boundedness on the whole plane permits only
constants. We instead consider $\varphi=f+\overline g$, where $f,g$
are entire functions of exponential type. Write
$$
H(1)=\{h\in\mathcal O(\mathbb C):
|h(z)|\le Ce^{r|z|}\ \text{for some }C,r>0\}.
$$
Bauer, Choe, and Koo \cite[Theorem 4.2]{Bauer2015} proved that, for two
such symbols, $T_\varphi T_\psi=T_\psi T_\varphi=0$ implies
$\varphi\equiv0$ or $\psi\equiv0$. We prove the finite-product
conclusion for this harmonic class without a commutativity assumption.
We use the common invariant space in \cite[Section 2.2]{Bauer2015}.
For $c>0$, let
$$
\mathcal D_c=\{u\text{ measurable on }\mathbb C:
ue^{-c|\cdot|^2}\in L^\infty(\mathbb C)\},
\qquad
\operatorname{Sym}(\mathbb C)=\bigcap_{c>0}\mathcal D_c,
$$
and set
$$
\mathcal H(\mathbb C)
=\left(\bigcup_{0<c<1/2}\mathcal D_c\right)\cap\mathcal O(\mathbb C).
$$
This is a dense subspace of $\mathcal F^2(\mathbb C)$, contains all
reproducing kernels, and is invariant under each $T_\varphi$ with
$\varphi\in\operatorname{Sym}(\mathbb C)$
\cite[Section 2.2]{Bauer2015}. The inequality
$$
e^{r|z|}\le e^{r^2/(4c)}e^{c|z|^2},\qquad c,r>0,
$$
shows that our harmonic symbols belong to $\operatorname{Sym}(\mathbb C)$.
Finite products are initially defined on $\mathcal H(\mathbb C)$.
A product vanishing on this dense subspace has a unique continuous
extension equal to zero on $\mathcal F^2(\mathbb C)$.
We interpret zero-product identities on $\mathcal F^2(\mathbb C)$  
in this sense.

\begin{samepage}
\begin{theorem}\label{T1}
Let $N$ be a positive integer and
$$
\varphi_j=f_j+\overline{g_j},\qquad
f_j,g_j\in H(1),\qquad 1\le j\le N.
$$
The following conditions are equivalent.
\begin{enumerate}
\item\label{T1a}
$T_{\varphi_1}\cdots T_{\varphi_N}=0$
on $\mathcal F^2(\mathbb C)$.
\item\label{T1b} $T_{\varphi_1}\cdots T_{\varphi_N}K_a=0$ for every $a\in\mathbb C$.
\item\label{T1c} $\varphi_j\equiv0$ for some $j$.
\end{enumerate}
\end{theorem}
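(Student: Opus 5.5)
The implications (\ref{T1c})$\Rightarrow$(\ref{T1a})$\Rightarrow$(\ref{T1b}) are immediate: a vanishing symbol gives $T_{\varphi_j}=0$ on $\mathcal H(\mathbb C)$, and $\mathcal H(\mathbb C)$ contains every $K_a$. The work is (\ref{T1b})$\Rightarrow$(\ref{T1c}). I would prove it by induction on $N$, peeling off the \emph{leftmost} factor at each step.

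\textbf{Base case $N=1$.} The hypothesis $T_\varphi K_a=0$ for all $a$ gives
$$
\langle T_\varphi K_a,K_a\rangle e^{-|a|^2}=0 .
$$
The left side is the Berezin transform of $\varphi$. A harmonic function of this growth equals its own Berezin transform (mean value property against the Gaussian), so $\varphi\equiv0$.

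\textbf{Setting up the induction.} First I make the operators explicit on $\mathcal H(\mathbb C)$. For $f\in H(1)$, $T_f$ is multiplication by $f$. For $g(z)=\sum_k b_kz^k\in H(1)$, I expect
$$
T_{\overline g}=\widetilde g(\partial),\qquad \widetilde g(\zeta)=\sum_k \overline{b_k}\,\zeta^k,
$$
an infinite-order differential operator. Exponential type gives $|b_k|\le C r^k/k!$, so the series converges on $\mathcal H(\mathbb C)$; one checks $T_{\overline z^k}=\partial^k$ on kernels.

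Next put $w=\overline a$ and define
$$
F(z,w)=\bigl(T_{\varphi_1}\cdots T_{\varphi_N}e^{(\cdot)w}\bigr)(z),\qquad
G(z,w)=\bigl(T_{\varphi_2}\cdots T_{\varphi_N}e^{(\cdot)w}\bigr)(z).
$$
Both are entire in $(z,w)$, with growth bounded by $e^{c|z|^2+c'|w|^2+r|z|+r|w|}$. Condition (\ref{T1b}) says $F\equiv0$, that is,
$$
f_1(z)\,G(z,w)+\widetilde g_1(\partial_z)G(z,w)=0 \qquad\text{identically}.
$$
If $\varphi_1\equiv0$ we are done. Otherwise I want to show $G\equiv0$; since $G$ is exactly the kernel data for the product of the remaining $N-1$ factors, the induction hypothesis then finishes.

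\textbf{Main obstacle: cancelling the nonzero left factor.} The difficulty is to show that $T_{\varphi_1}$ annihilates no nonzero function of the special two-variable form $G$. The example $\varphi=z+\overline z$ has formal kernel $e^{-z^2/2}$, so growth considerations must enter. My plan has three steps.

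\begin{enumerate}
\item Write $G(z,w)=e^{zw}B(z,w)$ and conjugate the equation by $\exp(-\partial_z\partial_w)$. Under this transform, multiplication by $z$ and the operator $\partial_z$ become operators acting more symmetrically in the two variables. The equation for $B$ then becomes one in which $f_1$ and $\widetilde g_1$ enter in a decoupled way, and the transformed function has controlled exponential type.
\item Fix $w$ and pair the equation with a suitable function in $\mathcal F^2(\mathbb C)$, aiming for a norm identity of the form
$$
\|f_1h\|^2+\|\widetilde g_1(\partial)h\|^2=(\text{cross terms}) .
$$
Here $\langle f_1h,\widetilde g_1(\partial)h\rangle$ should be rewritten via $\partial=T_{\overline z}$ as $\langle \overline{g_1}f_1h,h\rangle$. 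The goal is for the identity to force $f_1h=0$ and $\widetilde g_1(\partial)h=0$ separately.
\item Since $f_1\not\equiv0$ (or $g_1\not\equiv0$), conclude $h=0$ from the vanishing of $f_1h$, or from the fact that $\widetilde g_1(\partial)$ has no nonzero kernel elements of this growth class.
\end{enumerate}

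I expect step 2 to be the delicate point. If the direct pairing fails, I would instead compare Taylor coefficients in $w$ after the $\exp(-\partial_z\partial_w)$ transform: the lowest-order nonzero coefficient of $B$ in $w$ should satisfy a first-order (in the relevant sense) equation, and this equation should have no nonzero solutions in $\mathcal H(\mathbb C)$.
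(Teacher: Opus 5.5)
Your outer structure is the same as the paper's. That includes the easy implications, $T_{\overline g}=\widetilde g(\partial)$, keeping the kernel parameter $w$ free, factoring out $e^{zw}$, peeling off the leftmost factor, and conjugating by $\exp(-\partial_z\partial_w)$. The Berezin-transform base case is also fine. But the substance of the theorem is the cancellation step, and your plan for it cannot work.

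\emph{Freezing $w$ loses the information you need.} For fixed $w$ the equation is just $T_{\varphi_1}h=0$ for one function $h=G(\cdot,w)\in\mathcal H(\mathbb C)$. A single Toeplitz factor is not injective there: for $\varphi_1=\overline z$, $T_{\overline z}=\partial$ kills constants (cf.\ Remark~\ref{R1}). So your Step~3 claim that $\widetilde g_1(\partial)$ has no nonzero kernel in the growth class is false, and no identity in $\mathcal F^2(\mathbb C)$ at fixed $w$ can give $h=0$. In this example, $G(z,w)=c(w)$ is excluded only because the cofactor $B=c(w)e^{-zw}$ is not of exponential type \emph{jointly} in $(z,w)$. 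So the argument must use joint growth, i.e.\ live in $\mathcal F^2(\mathbb C^2)$. Moreover, after conjugation, multiplication by $f_1(z)$ becomes $f_1(M_z-\partial_w)$, which involves $\partial_w$, so $w$ cannot be frozen anyway.

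\emph{The one-variable expansion has no usable sign.} One has
\[
\|T_{\varphi_1}h\|^2=\|\varphi_1h\|^2-\|(I-P)(\overline{g_1}h)\|^2 ,
\]
so the Hankel defect enters with the wrong sign. The cross term is $\int f_1g_1|h|^2\,d\mu$ (with $g_1$, not $\overline{g_1}$), which has indefinite sign.

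\emph{The fallback fails too.} Before the transform, the lowest nonzero $w$-coefficient $B_{k_0}$ satisfies exactly $T_{\varphi_1}B_{k_0}=0$, and constants again solve it when $\varphi_1=\overline z$. After the transform, $f_1(M_z-\partial_w)$ lowers the $w$-degree, so the lowest coefficient is coupled to all higher ones and does not satisfy a closed equation.

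The missing idea is a two-variable positivity identity in which the Hankel defect appears with the good sign. Put $W=e^{-\partial_z\partial_w}B$. The equation becomes $(\mathcal B+\mathcal C)W=0$, where $\mathcal B=f_1(M_z-\partial_w)$ and $\mathcal C=M_{\widetilde g_1(w)}$. Their formal adjoints in $\mathcal F^2(\mathbb C^2)$ are $\mathcal B^\sharp=\widetilde f_1(\partial_z-M_w)$ and $\mathcal C^\sharp=g_1(\partial_w)$, with $\widetilde f_1$ defined like $\widetilde g_1$.

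The operators $M_z-\partial_w$ and $\partial_z-M_w$ commute, and $\partial_z-M_w$ commutes with $M_w$. Hence $\mathcal B^\sharp\mathcal B=\mathcal B\mathcal B^\sharp$, $\mathcal B^\sharp\mathcal C=\mathcal C\mathcal B^\sharp$ and $\mathcal C^\sharp\mathcal B=\mathcal B\mathcal C^\sharp$. The real parts of the cross terms in $\|(\mathcal B+\mathcal C)W\|^2$ and $\|(\mathcal B^\sharp+\mathcal C^\sharp)W\|^2$ therefore coincide. With $\Pi$ the projection of $L^2(d\mu(z)\,d\mu(w))$ onto $\mathcal F^2(\mathbb C^2)$, this gives
\[
\|(\mathcal B+\mathcal C)W\|^2=\|(\mathcal B^\sharp+\mathcal C^\sharp)W\|^2+\bigl\|(I-\Pi)\bigl(\overline{\widetilde g_1(w)}\,W\bigr)\bigr\|^2_{L^2} .
\]
So $\overline{\widetilde g_1(w)}W$ is holomorphic, hence $\overline{(\widetilde g_1)'(w)}\,W=0$, and $W=0$ unless $\widetilde g_1\equiv c$. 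In that case $\mathcal B+\mathcal C=e^{-\partial_z\partial_w}M_{f_1+c}\,e^{\partial_z\partial_w}$, which is injective because $f_1+c=\varphi_1\not\equiv0$.

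This argument needs two inputs you must supply. First, $B$ (hence $W$) must be of joint exponential type, which you establish through the right-to-left recursion. Second, you need $f_1(z)+\widetilde g_1(w)\not\equiv0$, which follows from $\varphi_1\not\equiv0$ by restricting to $w=\overline z$. Without this identity, or an equivalent two-variable argument, your induction step is unproved.
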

\end{samepage}

The proof keeps the independent kernel parameter $w$ in
$$
(T_{\varphi_1}\cdots T_{\varphi_N}e^{\,\cdot\,w})(z)
=e^{zw}V(z,w).
$$
The function $V$ is of exponential type in the two variables jointly.
Conjugation by $\exp(-\partial_z\partial_w)$ yields a norm identity
on these functions and permits successive cancellation of nonzero
factors. The multiplication and differentiation formulas used in this
argument are related to Wick symbol calculus \cite{Berezin1971} and
Fock Toeplitz composition formulas
\cite{BergerCoburn1987,Coburn2001,Bauer2009}. Heat transforms in this
calculus are studied in \cite{BergerCoburn1994,AgborBauer2015}.
The required series and conjugation identities are proved in
Sections \ref{S2} and \ref{S3}.

\section{Exponential-type functions and Toeplitz operators}\label{S2}

For the kernel calculations, we use the smaller class $H(1)$ and its
two-variable counterpart. The differential representation involves
infinite-order operators on entire functions of exponential type
\cite{Carmichael1936,Dickson1964,Wimmer1979,Levin1996}.
For their transform interpretation, see
\cite{Martineau1963,CarlssonPradoReyes2016}. We use derivative norms
to control the two-variable series and polynomial approximations.

For $d=1,2$, let $\mathcal E_d$ be the space of entire functions $U$ on
$\mathbb C^d$ such that
$|U(z)|\le Ce^{a\sum_{j=1}^d|z_j|}$ for some $C,a>0$. Thus
$\mathcal E_1=H(1)$. For $r>0$, put
\begin{equation}\label{E1}
q_r(U)=\sup_{\alpha\in\mathbb N_0^d}
\frac{|\partial^\alpha U(0)|}{r^{|\alpha|}},
\qquad
E_r^{(d)}=\{U\in\mathcal O(\mathbb C^d):q_r(U)<\infty\}.
\end{equation}
Here $\mathbb N_0=\{0,1,\ldots\}$,
$|\alpha|=\sum_j\alpha_j$, and $\alpha!=\prod_j\alpha_j!$.
We use $q_r$ in either dimension, as determined by its argument.
The map $U\mapsto
(\partial^\alpha U(0)/r^{|\alpha|})_{\alpha\in\mathbb N_0^d}$
has inverse
$$
(a_\alpha)_\alpha\longmapsto
\sum_{\alpha\in\mathbb N_0^d}
\frac{a_\alpha r^{|\alpha|}}{\alpha!}z^\alpha.
$$
This series converges locally uniformly for every bounded sequence.
The two maps are isometries, so $E_r^{(d)}$ is a Banach space.

\begin{lemma}\label{L1}
We have $\mathcal E_d=\bigcup_{r>0}E_r^{(d)}$. If
$U\in E_r^{(d)}$ and $V\in E_s^{(d)}$, then
\begin{equation}\label{E2}
q_r(\partial^\alpha U)\le r^{|\alpha|}q_r(U),
\qquad
q_{r+s}(UV)\le q_r(U)q_s(V).
\end{equation}
Let $P_mU$ be the Taylor polynomial of $U$ containing all terms of total
degree at most $m$. For $s>r$,
\begin{equation}\label{E3}
q_s(U-P_mU)\le q_r(U)(r/s)^{m+1}.
\end{equation}
Moreover, with the product Gaussian measure
$d\mu(z_1)\cdots d\mu(z_d)$ on $\mathbb C^d$, we have
\begin{equation}\label{E4}
\|U\|_{\mathcal F^2(\mathbb C^d)}
\le e^{dr^2/2}q_r(U).
\end{equation}
\end{lemma}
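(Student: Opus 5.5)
The plan is to prove the four assertions of Lemma \ref{L1} directly from Taylor coefficients, using Cauchy estimates for the characterization of $\mathcal E_d$. Throughout, write $c_\alpha=\partial^\alpha U(0)$, so that $U(z)=\sum_\alpha c_\alpha z^\alpha/\alpha!$ and $|c_\alpha|\le q_r(U)r^{|\alpha|}$.

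\emph{Characterization of $\mathcal E_d$.} If $U\in E_r^{(d)}$, then
$$
|U(z)|\le q_r(U)\sum_\alpha\frac{r^{|\alpha|}|z^\alpha|}{\alpha!}=q_r(U)e^{r\sum_j|z_j|},
$$
so $U\in\mathcal E_d$. Conversely, suppose $|U(z)|\le Ce^{a\sum|z_j|}$. I will apply the Cauchy estimate on the polydisc with radii $\rho_j$, which gives $|c_\alpha|\le C\alpha!\prod_j e^{a\rho_j}\rho_j^{-\alpha_j}$. Choosing $\rho_j=\alpha_j/a$ (or $\rho_j=1$ when $\alpha_j=0$) gives $|c_\alpha|\le C\prod_j \alpha_j!\,(ae/\alpha_j)^{\alpha_j}$. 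By Stirling, $k!(e/k)^k\le e\sqrt{k}\le e\cdot 2^k$, so $|c_\alpha|\le C'(2ae)^{|\alpha|}$ with $C'=Ce^d$. Hence $U\in E_r^{(d)}$ for $r=2ae$. This constant-chasing step is the only mildly delicate point; any fixed $r$ suffices.

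\emph{Inequality \eqref{E2}.} For the derivative bound, $\partial^\beta(\partial^\alpha U)(0)=c_{\alpha+\beta}$, and $|c_{\alpha+\beta}|\le q_r(U)r^{|\alpha|}r^{|\beta|}$ gives $q_r(\partial^\alpha U)\le r^{|\alpha|}q_r(U)$. For the product, Leibniz gives $\partial^\gamma(UV)(0)=\sum_{\alpha+\beta=\gamma}\binom{\gamma}{\alpha}\partial^\alpha U(0)\,\partial^\beta V(0)$. Its modulus is at most
$$
q_r(U)q_s(V)\sum\binom{\gamma}{\alpha}r^{|\alpha|}s^{|\beta|}=q_r(U)q_s(V)(r+s)^{|\gamma|},
$$
by the multinomial identity $\prod_j(r+s)^{\gamma_j}$.

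\emph{Inequalities \eqref{E3} and \eqref{E4}.} The function $U-P_mU$ has coefficients $c_\alpha$ for $|\alpha|>m$ and zero otherwise. Each nonzero coefficient satisfies
$$
\frac{|c_\alpha|}{s^{|\alpha|}}\le q_r(U)\,(r/s)^{|\alpha|}\le q_r(U)\,(r/s)^{m+1},
$$
since $r/s<1$. For the norm bound, the monomials $z^\alpha$ are orthogonal in $\mathcal F^2(\mathbb C^d)$ with $\|z^\alpha\|^2=\alpha!$. Hence
$$
\|U\|^2=\sum_\alpha\frac{|c_\alpha|^2}{\alpha!}\le q_r(U)^2\sum_\alpha\frac{r^{2|\alpha|}}{\alpha!}=q_r(U)^2e^{dr^2},
$$
and taking square roots gives \eqref{E4}. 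The Taylor series converges in norm, since its partial sums form a Cauchy sequence by the same estimate.
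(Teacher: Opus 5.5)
Your proof is correct and follows the paper's argument essentially line by line. You use the Taylor bound and Cauchy estimates for $\mathcal E_d=\bigcup_{r>0}E_r^{(d)}$, coefficientwise estimates and Leibniz's formula for \eqref{E2} and \eqref{E3}, and Parseval in the monomial basis for \eqref{E4}.

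The differences are cosmetic:
\begin{itemize}
\item When $\alpha_j=0$, the paper lets the Cauchy radius shrink to $0$ and uses $k!\le k^k$ to land directly in $E_{ea}^{(d)}$.
\item Your choice $\rho_j=1$ for such coordinates actually contributes a factor $e^{a}$ each. This factor is missing from your displayed bound, but it only enlarges $C'$.
\item Your Stirling step gives the cruder radius $2ae$ instead of $ea$.
\end{itemize}
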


\begin{proof}
The Taylor expansion gives
$$
|U(z)|\le q_r(U)
\sum_{\alpha\in\mathbb N_0^d}
\frac{r^{|\alpha|}|z^\alpha|}{\alpha!}
=q_r(U)e^{r\sum_j|z_j|}.
$$
Conversely, suppose $|U(z)|\le Ce^{a\sum_j|z_j|}$. For
$\alpha_j>0$, use the Cauchy radius $\alpha_j/a$ in the $j$th variable.
In a variable with $\alpha_j=0$, let the radius decrease to zero.
Since $k!\le k^k$ for $k\ge1$, the resulting estimate is
$$
|\partial^\alpha U(0)|
\le C\prod_{\alpha_j>0}
\alpha_j!(ea/\alpha_j)^{\alpha_j}
\le C(ea)^{|\alpha|}.
$$
This proves $U\in E_{ea}^{(d)}$.

The derivative estimate in \eqref{E2} follows from \eqref{E1}.
Leibniz's formula gives
$$
|\partial^\alpha(UV)(0)|
\le q_r(U)q_s(V)
\sum_{\beta\le\alpha}\binom{\alpha}{\beta}
r^{|\beta|}s^{|\alpha-\beta|}
=q_r(U)q_s(V)(r+s)^{|\alpha|}.
$$
For \eqref{E3}, the derivatives at zero of $U-P_mU$ vanish up to total
order $m$, and the remaining terms are bounded by
$q_r(U)(r/s)^{|\alpha|}$. For the Fock norm, use the orthonormal
basis $z^n/\sqrt{n!}$ from \cite[Proposition 2.1]{Zhu2012}, with its
parameter equal to $1$. Taking products in the two variables gives
$z^\alpha/\sqrt{\alpha!}$ when $d=2$. In either dimension, Parseval's
identity yields
$$
\|U\|_{\mathcal F^2(\mathbb C^d)}^2
=\sum_{\alpha\in\mathbb N_0^d}
\frac{|\partial^\alpha U(0)|^2}{\alpha!}
\le q_r(U)^2e^{dr^2},
$$
which proves \eqref{E4}.
\end{proof}

Throughout the paper, $D=\partial_z$ on one-variable functions, and
$g^*(z)=\overline{g(\overline z)}$. For $f,g,h\in H(1)$, the
representation needed below is
\begin{equation}\label{E5}
T_{f+\overline g}h=fh+g^*(D)h,
\qquad
g^*(D)h=\sum_{n=0}^{\infty}
\frac{\overline{g^{(n)}(0)}}{n!}h^{(n)}.
\end{equation}
Since $h\in H(1)\subseteq\operatorname{Sym}(\mathbb C)\cap\mathcal O(\mathbb C)$,
we can apply the Gaussian reproducing formula
\cite[Section 2.2, equation (2.4)]{Bauer2015} with its dimension and
Gaussian parameter equal to $1$. For the Taylor polynomial $g_m=P_mg$,
that formula gives
$$
P(\overline{g_m}h)
=\sum_{n=0}^m\frac{\overline{g^{(n)}(0)}}{n!}h^{(n)}.
$$
Choose $r,s>0$ such that $q_r(h),q_s(g)<\infty$ and
$q_s(g-g_m)\to0$, which is possible by \eqref{E3}. The derivative
estimate in \eqref{E2} gives
\begin{equation}\label{E6}
q_r(g^*(D)h)
\le\sum_{n=0}^{\infty}
\frac{|g^{(n)}(0)|}{n!}q_r(h^{(n)})
\le e^{rs}q_s(g)q_r(h).
\end{equation}
Thus the polynomial sums converge in $q_r$, hence in the Fock norm
by \eqref{E4}. On the projection side, \eqref{E2} and \eqref{E4}
give
$$
\begin{aligned}
\|P(\overline{g-g_m}h)\|_{\mathcal F^2(\mathbb C)}
&\le\|(g-g_m)h\|_{\mathcal F^2(\mathbb C)}\\
&\le e^{(r+s)^2/2}q_s(g-g_m)q_r(h)\longrightarrow0.
\end{aligned}
$$
Passing to the limit proves \eqref{E5}.

\section{A cancellation lemma}\label{S3}

We now work on $\mathcal E_2$ with independent variables $z,w$. For
$t\in\mathbb C$, define
\begin{equation}\label{E7}
J_tU=e^{t\partial_z\partial_w}U
=\sum_{k=0}^{\infty}\frac{t^k}{k!}\partial_z^k\partial_w^kU.
\end{equation}
For $U(z,w)=f(z)g^*(w)$ with $f,g\in H(1)$, restriction to
$w=\overline z$ gives the series in
\cite[Proposition 3.6]{Bauer2015}, which represents the heat transform
of $f\overline g$ for $t>0$ by \cite[Lemma 3.7]{Bauer2015}.
For general $U\in E_r^{(2)}$, \eqref{E2} gives
$q_r(\partial_z^k\partial_w^kU)\le r^{2k}q_r(U)$ for every $k\ge0$.
Thus \eqref{E7} converges absolutely in operator norm on $E_r^{(2)}$,
locally uniformly in $t\in\mathbb C$. The double series for $J_tJ_s$
is bounded in operator norm by $e^{(|t|+|s|)r^2}$.
Regrouping it by total order gives
$$
\begin{aligned}
J_tJ_sU
&=\sum_{m=0}^{\infty}
\left(\sum_{k=0}^m\frac{t^ks^{m-k}}{k!(m-k)!}\right)
\partial_z^m\partial_w^mU\\
&=\sum_{m=0}^{\infty}\frac{(t+s)^m}{m!}
\partial_z^m\partial_w^mU=J_{t+s}U.
\end{aligned}
$$
In particular,
\begin{equation}\label{E8}
q_r(J_tU)\le e^{|t|r^2}q_r(U),
\qquad
J_tJ_s=J_{t+s},
\qquad
J_t^{-1}=J_{-t}.
\end{equation}

For $G\in H(1)$ and $U\in\mathcal E_2$, set
\begin{equation}\label{E9}
N_GU(z,w)=\sum_{k=0}^{\infty}
\frac{G^{(k)}(w)}{k!}\partial_z^kU(z,w).
\end{equation}
For $q_s(G),q_r(U)<\infty$, \eqref{E2} gives
\begin{equation}\label{E10}
\begin{aligned}
q_{r+s}(N_GU)
&\le\sum_{k=0}^{\infty}\frac{q_s(G^{(k)})q_r(\partial_z^kU)}{k!}\\
&\le q_s(G)q_r(U)\sum_{k=0}^{\infty}\frac{(rs)^k}{k!}
=e^{rs}q_s(G)q_r(U).
\end{aligned}
\end{equation}
This proves absolute convergence in $E_{r+s}^{(2)}$ and
$N_G\mathcal E_2\subseteq\mathcal E_2$.

For $F\in\mathcal E_2$, let $M_F:\mathcal E_2\to\mathcal E_2$ denote
multiplication by $F$. For a polynomial $U$,
Leibniz's formula in \eqref{E7} gives
$$
\begin{aligned}
J_t(wU)&=wJ_tU+t\partial_zJ_tU,\\
J_t(zU)&=zJ_tU+t\partial_wJ_tU.
\end{aligned}
$$
Combining these identities with \eqref{E8}, we obtain
\begin{equation}\label{E21}
J_tM_wJ_{-t}=M_w+t\partial_z,
\qquad J_tM_zJ_{-t}=M_z+t\partial_w
\end{equation}
on polynomials. If $G$ is also a polynomial, the commuting operators
$M_w$ and $\partial_z$ give
$$
J_1M_{G(w)}J_{-1}U
=G(M_w+\partial_z)U
=\sum_{k=0}^{\deg G}\frac{G^{(k)}(w)}{k!}\partial_z^kU
=N_GU.
$$
For general $G\in H(1)$ and $U\in\mathcal E_2$, choose Taylor
polynomials $G_m,U_m$ such that $G_m\to G$ in $q_s$ and
$U_m\to U$ in $q_r$. This is possible after increasing $r,s$ in
\eqref{E3}. The estimate \eqref{E10} yields
$$
q_{r+s}(N_{G_m}U_m-N_GU)
\le e^{rs}\bigl(q_s(G_m-G)q_r(U_m)
+q_s(G)q_r(U_m-U)\bigr)\longrightarrow0.
$$
Similarly, \eqref{E2} and \eqref{E8} give
$$
\begin{aligned}
&q_{r+s}\bigl(J_1M_{G_m(w)}J_{-1}U_m
-J_1M_{G(w)}J_{-1}U\bigr)\\
&\quad\le e^{r^2+(r+s)^2}
\bigl(q_s(G_m-G)q_r(U_m)+q_s(G)q_r(U_m-U)\bigr)
\longrightarrow0.
\end{aligned}
$$
The polynomial identity therefore extends to $\mathcal E_2$, and
\eqref{E8} gives
\begin{equation}\label{E11}
J_{-1}N_GJ_1=M_{G(w)}.
\end{equation}

We use the standard two-variable Fock space
$\mathcal F=\mathcal F^2(\mathbb C^2)$ with measure
$d\nu(z,w)=d\mu(z)d\mu(w)$ and inner product linear in the first variable.
Let $\Pi$ be the orthogonal projection from $L^2(d\nu)$ onto $\mathcal F$.

\begin{proposition}\label{P1}
Suppose $f,G\in H(1)$ and $f(z)+G(w)\not\equiv0$. If $V\in\mathcal E_2$
satisfies
\begin{equation}\label{E12}
f(z)V+N_GV=0,
\end{equation}
then $V=0$.
\end{proposition}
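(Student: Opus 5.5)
Conjugating by $J_1$ turns \eqref{E12} into a multiplication equation in which both terms are multiplication operators. Put $W=J_{-1}V$, which lies in $\mathcal E_2$ by \eqref{E8}, so that $V=J_1W$. By \eqref{E11}, $J_{-1}N_GV=J_{-1}N_GJ_1W=G(w)W$. For the other term, the second identity in \eqref{E21} with $t=-1$ gives $J_{-1}M_zJ_1=M_z-\partial_w$ on polynomials. The same approximation argument used for \eqref{E11}, with the roles of $z$ and $w$ exchanged, extends this to
$$
J_{-1}M_{f(z)}J_1=f(M_z-\partial_w)=\sum_{k\ge0}\frac{(-1)^kf^{(k)}(z)}{k!}\partial_w^k=:\widetilde N_f .
$$
Convergence of this series on $\mathcal E_2$ follows from \eqref{E10}, applied with the variables swapped and $G^{(k)}$ replaced by $(-1)^kf^{(k)}$. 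Hence \eqref{E12} becomes
$$
\widetilde N_fW+G(w)W=0 .
$$
This is not yet pure multiplication. So instead I would conjugate only halfway and use the Fock norm identity hinted at in the introduction.

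The refined plan is to use $J_{-1/2}$ together with the Fock-space adjoint structure. On $\mathcal F=\mathcal F^2(\mathbb C^2)$, the adjoint of $\partial_z$ is $M_z$ and the adjoint of $\partial_w$ is $M_w$. Consequently $J_1=e^{\partial_z\partial_w}$ is formally the adjoint of $e^{M_zM_w}$, the multiplication by $e^{zw}$. Take the $\mathcal F$ inner product of \eqref{E12} with a suitable test function. Since $N_G=J_1M_{G(w)}J_{-1}$ by \eqref{E11}, the equation becomes
$$
f(z)V=-J_1M_{G(w)}J_{-1}V .
$$
Here is the concrete route. The Toeplitz operator $\Pi M_{\overline{G^*(\overline w)}}\Pi$ has symbol $\overline{G^*(\overline w)}=G(\overline w)$, and by \eqref{E5} applied in the $w$-variable it acts as $G(\partial_w)$. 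Setting $\overline w=z$ makes $N_G$ resemble a Toeplitz operator with symbol $G(\bar z)$ acting in $z$. This suggests that \eqref{E12} is the restriction of an equation $T_{f+\overline{g}}$-type in $z$ whose parameter is $w$, and that one should prove injectivity directly.

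So I would split into cases.

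\emph{Case $G$ constant.} Then $N_G=G$, and $(f(z)+G)V=0$ with $f+G\not\equiv0$ forces $V=0$, since $\mathcal O(\mathbb C^2)$ is an integral domain.

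\emph{Case $f$ constant.} Conjugating gives $(f+G(w))J_{-1}V=0$, so $J_{-1}V=0$ and hence $V=0$ by \eqref{E8}.

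\emph{General case.} Fix $w$, regard $V(\cdot,w)\in H(1)$, and treat $T=M_f+N_G$. In the $z$-variable this equals $T_{f+\overline{g_w}}$ with $g_w^*(\zeta)=G(w+\zeta)$, which follows from \eqref{E5} and Taylor's formula $G(w+\zeta)=\sum_k G^{(k)}(w)\zeta^k/k!$. Thus \eqref{E12} reads $T_{f+\overline{g_w}}V(\cdot,w)=0$ for each $w$.

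For injectivity on $H(1)$, I would pair with $V(\cdot,w)$ in $\mathcal F^2(\mathbb C)$:
$$
\int (f+\overline{g_w})|V|^2\,d\mu=0 .
$$
This alone is not decisive. Instead, I would use the norm identity $\|M_{f}h\|$ versus $\|g_w^*(D)h\|$, by way of $\langle fh,g_w^*(D)h\rangle=\langle \overline{g_w}fh,h\rangle$ rewritten. This comparison is the hardest step. A cleaner fallback is analytic in $w$: write $G(w+\zeta)$ and expand $V$ in powers of $w$ around a point. Compare the leading terms in the growth indicator: if $V\neq0$, choose the top-degree homogeneous part after applying $J_{-1}$ and conjugating. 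Then the Taylor expansion of $W$ gives $\sum_k(-1)^kf^{(k)}(z)\partial_w^kW/k!=-G(w)W$. Comparing degrees in $w$ of the lowest-order Taylor coefficients in $w$ gives $f(z)W_0=-G(0)W_0$, where $W_0$ is the lowest nonzero $w$-coefficient. This forces $f\equiv -G(0)$ unless $W_0=0$, and then the constant-$f$ case finishes.

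I expect the main obstacle to be making this lowest-order comparison rigorous. The term $\partial_w^k$ lowers the $w$-degree, so one must instead compare the \emph{highest} degree, or use the exponential-type norm bound \eqref{E4} to order the terms.
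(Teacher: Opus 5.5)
Your two degenerate cases ($G$ constant, $f$ constant) are correct. Your first conjugation already produces the right equation: with $W=J_{-1}V$, the equation becomes $(B+C)W=0$ with $B=f(M_z-\partial_w)$ and $C=M_{G(w)}$, exactly as in the paper. The gap is that you then abandon this equation and never prove the case where $f$ and $G$ are both nonconstant, which is the real content of the proposition.

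Your fixed-$w$ route needs injectivity of $T_{f+\overline{g_w}}$ on $H(1)$, and this is false in general. For $f=0$ and $G(\zeta)=\zeta$, the function $V=c(w)e^{-zw}$ satisfies $N_GV=0$ and lies in $H(1)$ for each fixed $w$. Only the joint exponential type in $(z,w)$ excludes it. So you would need a one-variable kernel theorem for nonconstant $f$ that you neither state precisely nor prove, and the pairing $\int(f+\overline{g_w})|V|^2\,d\mu=0$ does not give it.

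The Taylor fallback also fails. Write $W=\sum_n W_n(z)w^n$. The coefficient of $w^m$ in $\sum_k(-1)^kf^{(k)}(z)\partial_w^kW/k!$ is $\sum_{k\ge0}(-1)^k\binom{m+k}{k}f^{(k)}(z)W_{m+k}(z)$. So the lowest coefficient is coupled to all higher ones, and $fW_0=-G(0)W_0$ does not follow. Nor is there a top degree to compare, since $W$ is entire in $w$.

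The missing idea is a norm identity in $\mathcal F^2(\mathbb C^2)$ for the equation $(B+C)W=0$ you already had. On polynomials, $M_z$ has formal adjoint $\partial_z$ and $\partial_w$ has formal adjoint $M_w$. Hence $X=M_z-\partial_w$ has formal adjoint $X^\sharp=\partial_z-M_w$, and
\[
[X^\sharp,X]=0,\qquad [X^\sharp,M_w]=0,\qquad [\partial_w,X]=0 .
\]
These give three facts:
\begin{itemize}
\item $B=f(X)$ is formally normal, with adjoint $B^\sharp=f^*(X^\sharp)$;
\item $B^\sharp$ commutes with $C$;
\item $C^\sharp=G^*(\partial_w)$ commutes with $B$.
\end{itemize}
Consequently $\|BW\|=\|B^\sharp W\|$ and $\langle BW,CW\rangle=\langle C^\sharp W,B^\sharp W\rangle$. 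Also $C^\sharp W=\Pi(\overline{G(w)}W)$. Expanding both squares then yields
\[
\|(B+C)W\|_{\mathcal F}^2=\|(B^\sharp+C^\sharp)W\|_{\mathcal F}^2+\|(I-\Pi)(\overline{G(w)}W)\|_{L^2(d\nu)}^2 .
\]
Thus $(B+C)W=0$ forces $\overline{G(w)}W$ to be holomorphic, so $\overline{G'(w)}W=0$. Hence $W=0$ whenever $G$ is nonconstant, with no hypothesis on $f$ at all; the remaining case, $G$ constant, is your easy case.

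These identities are first checked on polynomials and then extended to $\mathcal E_2$ through the $q_r$ estimates and the Fock-norm bound of Lemma \ref{L1}. Without an argument playing this role, your proposal does not prove the proposition.
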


\begin{proof}
Put $W=J_{-1}V$. Equation \eqref{E11} transforms \eqref{E12} into
$(B+C)W=0$, where
$$
B=J_{-1}M_{f(z)}J_1,\qquad C=M_{G(w)}.
$$
On the common invariant space $\mathcal E_2$, define
$$
B^\sharp=J_{-1}M_{f^*(-w)}J_1,
\qquad C^\sharp=G^*(\partial_w).
$$
If $q_s(f),q_s(G),q_r(U)<\infty$, the bound for $B$ follows from
$$
\begin{aligned}
q_{r+s}(BU)
&\le e^{(r+s)^2}q_{r+s}(fJ_1U)\\
&\le e^{(r+s)^2}q_s(f)q_r(J_1U)
\le e^{r^2+(r+s)^2}q_s(f)q_r(U).
\end{aligned}
$$
Since $q_s(f^*(-w))=q_s(f)$, this also bounds $B^\sharp$.
For $C^\sharp$, the two-variable derivative estimate in \eqref{E2} gives
$$
q_r(C^\sharp U)
\le\sum_{k=0}^{\infty}\frac{|G^{(k)}(0)|}{k!}
q_r(\partial_w^kU)
\le e^{rs}q_s(G)q_r(U).
$$
Together with the product estimate for $C$, these bounds give
\begin{equation}\label{E13}
\begin{aligned}
\max\{q_{r+s}(BU),q_{r+s}(B^\sharp U)\}
&\le e^{r^2+(r+s)^2}q_s(f)q_r(U),\\
q_{r+s}(CU)&\le q_s(G)q_r(U),\\
q_r(C^\sharp U)&\le e^{rs}q_s(G)q_r(U).
\end{aligned}
\end{equation}

To establish their pairings and commutation relations, first take
polynomial symbols and vectors and put
$$
X=M_z-\partial_w,\qquad X^\sharp=\partial_z-M_w.
$$
The monomial identity
$\langle z^aw^b,z^cw^d\rangle_{\mathcal F}
=a!b!\,\delta_{ac}\delta_{bd}$ gives
$$
\langle zU,V\rangle_{\mathcal F}
=\langle U,\partial_zV\rangle_{\mathcal F},\qquad
\langle\partial_wU,V\rangle_{\mathcal F}
=\langle U,wV\rangle_{\mathcal F}.
$$
Subtracting proves
$\langle XU,V\rangle_{\mathcal F}=\langle U,X^\sharp V\rangle_{\mathcal F}$.
By \eqref{E21},
$$
J_{-1}M_zJ_1=X,\qquad
J_{-1}M_{-w}J_1=X^\sharp,
$$
so polynomial substitution gives $B=f(X)$ and
$B^\sharp=f^*(X^\sharp)$.
Since
$$
[X^\sharp,X]=[\partial_z,M_z]+[M_w,\partial_w]=0,
\qquad
[X^\sharp,M_w]=[\partial_w,X]=0,
$$
where $[S,T]=ST-TS$, we obtain
\begin{equation}\label{E14}
B^\sharp B=BB^\sharp,\qquad
B^\sharp C=CB^\sharp,\qquad
C^\sharp B=BC^\sharp.
\end{equation}
Writing $f(z)=\sum_k a_kz^k$, repeated use of the pairing for $X$
gives
$$
\langle BU,V\rangle_{\mathcal F}
=\sum_k a_k\langle X^kU,V\rangle_{\mathcal F}
=\left\langle U,\sum_k\overline{a_k}(X^\sharp)^kV\right\rangle_{\mathcal F}.
$$
The monomial pairing for $w$ gives the same conclusion for $C$.
We have proved
\begin{equation}\label{E15}
\langle BU,V\rangle_{\mathcal F}
=\langle U,B^\sharp V\rangle_{\mathcal F},
\qquad
\langle CU,V\rangle_{\mathcal F}
=\langle U,C^\sharp V\rangle_{\mathcal F}.
\end{equation}

For general symbols and vectors, \eqref{E3} allows us to choose
$r,s>0$ and Taylor polynomials with $f_m\to f$ and $G_m\to G$ in $q_s$,
and $U_m\to U$ and $V_m\to V$ in $q_r$. Let $B_m,B_m^\sharp,C_m,C_m^\sharp$ be the corresponding
operators. For any one of these operators $A_m$ and its limit $A$,
\eqref{E13} gives
$$
q_{r+s}(A_mU_m-AU)
\le C_{r,s}\bigl(\varepsilon_m q_r(U_m)+q_r(U_m-U)\bigr)
\longrightarrow0,
$$
where $\varepsilon_m=q_s(f_m-f)+q_s(G_m-G)$ and $C_{r,s}$ also depends
on $q_s(f)+q_s(G)$, but not on $m$. To pass to compositions, write
$$
B_m^\sharp B_mU_m-B^\sharp BU
=(B_m^\sharp-B^\sharp)B_mU_m+B^\sharp(B_mU_m-BU).
$$
Two applications of \eqref{E13} bound its $q_{r+2s}$ norm by
$$
C_{r,s}\bigl(\varepsilon_m q_s(f_m)q_r(U_m)
+q_{r+s}(B_mU_m-BU)\bigr)\longrightarrow0.
$$
The other compositions in \eqref{E14} have the same two-factor
decomposition. This proves \eqref{E14} on $\mathcal E_2$.
By \eqref{E4}, the single-factor limits also hold in $\mathcal F$, so
$$
\langle BU,V\rangle_{\mathcal F}
=\lim_{m\to\infty}\langle B_mU_m,V_m\rangle_{\mathcal F}
=\lim_{m\to\infty}\langle U_m,B_m^\sharp V_m\rangle_{\mathcal F}
=\langle U,B^\sharp V\rangle_{\mathcal F}.
$$
The same limit with $C_m$ proves the second identity in \eqref{E15}.

For $W\in\mathcal E_2$, \eqref{E14} and \eqref{E15} imply
$$
\begin{aligned}
\|BW\|_{\mathcal F}^2
&=\langle W,B^\sharp BW\rangle_{\mathcal F}
=\langle W,BB^\sharp W\rangle_{\mathcal F}
=\|B^\sharp W\|_{\mathcal F}^2,\\
\langle BW,CW\rangle_{\mathcal F}
&=\langle C^\sharp BW,W\rangle_{\mathcal F}
=\langle BC^\sharp W,W\rangle_{\mathcal F}
=\langle C^\sharp W,B^\sharp W\rangle_{\mathcal F}.
\end{aligned}
$$
The real parts of the cross terms are therefore equal. Expanding the
two squared norms now gives
\begin{equation}\label{E16}
\|(B+C)W\|_{\mathcal F}^2
-\|(B^\sharp+C^\sharp)W\|_{\mathcal F}^2
=\|CW\|_{\mathcal F}^2-\|C^\sharp W\|_{\mathcal F}^2.
\end{equation}
The product $\overline{G(w)}W$ belongs to $L^2(d\nu)$, since its
absolute value equals that of $G(w)W$. For every holomorphic polynomial $p$, \eqref{E15} gives
$$
\langle p,C^\sharp W\rangle_{\mathcal F}
=\langle G(w)p,W\rangle_{\mathcal F}
=\langle p,\overline{G(w)}W\rangle_{L^2(d\nu)}.
$$
Since polynomials are dense in $\mathcal F$,
$C^\sharp W=\Pi(\overline{G(w)}W)$.
It follows that
$$
\begin{aligned}
\|CW\|_{\mathcal F}^2-\|C^\sharp W\|_{\mathcal F}^2
&=\|\overline{G(w)}W\|_{L^2(d\nu)}^2
-\|\Pi(\overline{G(w)}W)\|_{\mathcal F}^2\\
&=\|(I-\Pi)(\overline{G(w)}W)\|_{L^2(d\nu)}^2.
\end{aligned}
$$
Substitution into \eqref{E16} proves
\begin{equation}\label{E17}
\begin{aligned}
\|(B+C)W\|_{\mathcal F}^2
={}&\|(B^\sharp+C^\sharp)W\|_{\mathcal F}^2\\
&+\|(I-\Pi)(\overline{G(w)}W)\|_{L^2(d\nu)}^2.
\end{aligned}
\end{equation}
Since $(B+C)W=0$, \eqref{E17} shows that $\overline{G(w)}W$ agrees
with its holomorphic projection, everywhere by continuity. Its
$\overline w$ derivative is therefore zero, so
$$
\overline{G'(w)}W(z,w)=0.
$$
If $G$ is nonconstant, then $G'$ is nonzero on a nonempty open set,
and the identity theorem gives $W=0$. If $G=c$ is constant, then
$$
B+C=J_{-1}M_{f+c}J_1.
$$
The function $f+c$ is not identically zero, so its multiplication
operator is injective. The invertibility in \eqref{E8} again gives
$W=0$. In both cases, $V=J_1W=0$.
\end{proof}

\section{Finite products}\label{S4}

For $U\in E_r^{(2)}$, the Taylor bound in Lemma \ref{L1} gives
$$
|e^{zw}U(z,w)|\le q_r(U)e^{r|w|}e^{(r+|w|)|z|}.
$$
For each fixed $w$, this places $e^{zw}U(z,w)$ in $H(1)$, so
\eqref{E5} applies. The differential term is computed as follows.

\begin{lemma}\label{L2}
For $G\in H(1)$ and $U\in\mathcal E_2$,
\begin{equation}\label{E18}
G(D)\bigl(e^{zw}U(z,w)\bigr)
=e^{zw}N_GU(z,w),\qquad z,w\in\mathbb C.
\end{equation}
\end{lemma}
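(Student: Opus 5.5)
We fix $w$ and view $h_w(z)=e^{zw}U(z,w)$ as an element of $H(1)$ in $z$; the remark just before the lemma gives this. By \eqref{E5} the left side of \eqref{E18} is then
$$
G(D)h_w=\sum_{n=0}^\infty\frac{c_n}{n!}\,\partial_z^n\bigl(e^{zw}U\bigr),
\qquad c_n=\overline{(G^*)^{(n)}(0)}=G^{(n)}(0),
$$
where we read $G(D)$ as the operator $g^*(D)$ with $g^*=G$, so that the coefficients are the Taylor coefficients of $G$. The plan is to expand each derivative by Leibniz's rule, $\partial_z^n(e^{zw}U)=e^{zw}\sum_{k\le n}\binom nk w^{n-k}\partial_z^kU$, and then regroup the resulting double series by $k$. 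This gives
$$
e^{zw}\sum_{k=0}^\infty\frac{1}{k!}\Bigl(\sum_{n\ge k}\frac{G^{(n)}(0)}{(n-k)!}w^{n-k}\Bigr)\partial_z^kU
=e^{zw}\sum_{k=0}^\infty\frac{G^{(k)}(w)}{k!}\partial_z^kU,
$$
because the inner sum is exactly the Taylor expansion of $G^{(k)}$ about $0$ evaluated at $w$. The right-hand side is $e^{zw}N_GU$ by \eqref{E9}.

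The only real point is to justify the rearrangement, and for this we will prove absolute convergence pointwise. Choose $r,s$ with $q_r(U),q_s(G)<\infty$. Then $|G^{(n)}(0)|\le s^nq_s(G)$. By \eqref{E2} and the Taylor bound of Lemma \ref{L1}, $|\partial_z^kU(z,w)|\le r^kq_r(U)e^{r(|z|+|w|)}$. The absolute double series is therefore bounded by
$$
q_s(G)\,q_r(U)\,e^{r(|z|+|w|)}\sum_n\sum_{k\le n}\frac{s^n}{k!(n-k)!}|w|^{n-k}r^k
=q_s(G)\,q_r(U)\,e^{r(|z|+|w|)}\,e^{s(|w|+r)}<\infty.
$$
Hence Fubini for series permits the regrouping, and both orders of summation converge pointwise to the same value.

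One should also check that the series in \eqref{E5} defining $G(D)h_w$ is the pointwise sum used above. It converges in $q_\rho$ for suitable $\rho$ (by \eqref{E6}), and $q_\rho$ convergence implies locally uniform convergence, so the pointwise values agree. Likewise $N_GU$ converges in $E_{r+s}^{(2)}$ by \eqref{E10}, hence pointwise. No step seems a genuine obstacle; the main care is this bookkeeping of convergence modes together with the conjugation convention in \eqref{E5}.
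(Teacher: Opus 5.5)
Your proposal is correct and follows the paper's proof essentially step for step: the Leibniz expansion of $\partial_z^n(e^{zw}U)$, the same absolute bound $q_s(G)q_r(U)e^{r(|z|+|w|)+s(|w|+r)}$ to justify regrouping by $k$, and the identification of the inner sum with the Taylor series of $G^{(k)}$ at $w$. Your extra check that the norm-convergent series in \eqref{E5} agrees with the pointwise sum is a harmless refinement; the paper instead notes that all the series converge locally uniformly in $(z,w)$.
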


\begin{proof}
Choose $r,s>0$ such that $q_r(U),q_s(G)<\infty$. The derivative
estimate in \eqref{E2} and the Taylor bound in Lemma \ref{L1} imply
$$
\begin{aligned}
&\sum_{m=0}^{\infty}\frac{|G^{(m)}(0)|}{m!}
\sum_{k=0}^{m}\binom{m}{k}|w|^{m-k}
|\partial_z^kU(z,w)|\\
&\hspace{1cm}\le
q_s(G)q_r(U)e^{r(|z|+|w|)}
\sum_{m=0}^{\infty}\frac{s^m(|w|+r)^m}{m!}\\
&\hspace{1cm}=
q_s(G)q_r(U)e^{r(|z|+|w|)+s(|w|+r)}.
\end{aligned}
$$
This bound permits the interchange in the Leibniz expansion, giving
$$
\begin{aligned}
e^{-zw}G(D)(e^{zw}U)
&=\sum_{k=0}^{\infty}\frac{\partial_z^kU}{k!}
\sum_{\ell=0}^{\infty}
\frac{G^{(k+\ell)}(0)}{\ell!}w^\ell\\
&=\sum_{k=0}^{\infty}
\frac{G^{(k)}(w)}{k!}\partial_z^kU
=N_GU.
\end{aligned}
$$
The bound is locally uniform in $(z,w)$, so all displayed series also
converge locally uniformly.
\end{proof}

\begin{proof}[Proof of Theorem \ref{T1}]
Condition (\ref{T1a}) implies (\ref{T1b}) because each $K_a$ belongs to
$\mathcal H(\mathbb C)$.
Assume (\ref{T1b}) and suppose that every $\varphi_j$ is nonzero. Put
$G_j=g_j^*$ and define, from right to left,
\begin{equation}\label{E19}
V_{N+1}=1,\qquad
V_j=f_j(z)V_{j+1}+N_{G_j}V_{j+1},
\qquad j=N,\ldots,1.
\end{equation}
Equations \eqref{E2} and \eqref{E10} show inductively that every
$V_j\in\mathcal E_2$. By \eqref{E5} and \eqref{E18},
$$
\begin{aligned}
T_{\varphi_j}(e^{zw}V_{j+1})
&=f_j(z)e^{zw}V_{j+1}+G_j(D)(e^{zw}V_{j+1})\\
&=e^{zw}\bigl(f_j(z)V_{j+1}+N_{G_j}V_{j+1}\bigr)
=e^{zw}V_j.
\end{aligned}
$$
Starting with $V_{N+1}=1$, this identity gives
\begin{equation}\label{E20}
(T_{\varphi_j}\cdots T_{\varphi_N}e^{\,\cdot\,w})(z)
=e^{zw}V_j(z,w).
\end{equation}
Since $e^{zw}=K_{\overline w}(z)$, condition (\ref{T1b}) and \eqref{E20}
give $V_1=0$.

Each function $f_j(z)+G_j(w)$ is nonzero, since its restriction to
$w=\overline z$ is $\varphi_j(z)$. Applying Proposition \ref{P1} to
\eqref{E19} first gives $V_2=0$, then $V_3=0$, and ultimately
$V_{N+1}=0$. This contradicts $V_{N+1}=1$ and proves (\ref{T1c}).
If (\ref{T1c}) holds, invariance of $\mathcal H(\mathbb C)$
shows that the product vanishes there. Its continuous extension
is therefore zero on $\mathcal F^2(\mathbb C)$, proving (\ref{T1a}).
\end{proof}

\begin{remark}\label{R1}
The injectivity in Proposition \ref{P1} concerns the action on
two-variable kernel functions. An individual Toeplitz factor need not
be injective on $\mathcal H(\mathbb C)$. For example, $T_{\overline z}=D$ annihilates the
constants. The recursion \eqref{E19} retains the independent kernel
parameter, which permits cancellation despite this kernel.
\end{remark}

\begingroup
\linespread{1}\selectfont

\endgroup
\end{document}